\documentclass[11pt,twoside, a4paper]{amsart}

\usepackage{mathptmx}
\usepackage{amssymb}
\usepackage{eucal}
\usepackage{amsxtra}
\usepackage{verbatim}
\usepackage{url}
\usepackage{enumerate}
\usepackage[english]{babel}
\usepackage[T1]{fontenc}
\usepackage{graphicx}
\usepackage{mathrsfs}
\usepackage{amscd,latexsym,amsthm,amsfonts,amssymb,amsmath,amsxtra}
\usepackage[colorlinks, urlcolor=blue,  citecolor=blue]{hyperref}
\usepackage{enumerate}
\usepackage[all]{xy}%
\setcounter{MaxMatrixCols}{30} 

%


  \sloppy   \emergencystretch 10pt
\hyphenpenalty=5000
\clubpenalty=5000
\widowpenalty=5000
\displaywidowpenalty=1500

\usepackage{comment}


\theoremstyle{plain}

\newtheorem{theorem}{Theorem}
\newtheorem{proposition}[theorem]{Proposition}
\newtheorem{lemma}[theorem]{Lemma}
\newtheorem{corollary}[theorem]{Corollary}

\newtheorem{conditional-result}[theorem]{Conditional Result}

\newtheorem{theorem?}{Theorem(?)} [section]
\newtheorem{proposition?}[theorem]{Proposition(?)}
\newtheorem{lemma?}[theorem]{Lemma(?)}
\newtheorem{corollary?}[theorem]{Corollary(?)}

\newtheorem*{theorem*}{Theorem}
\newtheorem*{proposition*}{Proposition}
\newtheorem*{lemma*}{Lemma}
\newtheorem*{corollary*}{Corollary}
\newtheorem*{question*}{Question}
\newtheorem*{conjecture*}{Conjecture}
\newtheorem*{claim*}{Claim}

\newtheorem*{introtheorem*}{Theorem}
\newtheorem*{introproposition*}{Proposition}
\newtheorem*{introlemma*}{Lemma}
\newtheorem*{introcorollary*}{Corollary}

\theoremstyle{definition}

\newtheorem*{definition*}{Definition}
\newtheorem*{example*}{Example}

\theoremstyle{remark}
\newtheorem{remark}[theorem]{Remark}

\newtheorem*{remark*}{Remark}

\numberwithin{equation}{section}
\numberwithin{theorem}{section}


\DeclareSymbolFont{rsfs}{U}{rsfs}{m}{n}
\DeclareSymbolFontAlphabet{\mathcal}{rsfs}

%
%
%
%

\newcommand{\ZZ}{{\mathbb{Z}}}
\newcommand{\QQ}{{\mathbb{Q}}}
\newcommand\OO{\mathbb {O}}

\newcommand\YY{\mathfrak{Y}}
\newcommand\ttt{\mathbf{t}}
\newcommand\ww{\mathbf{w}}

\newcommand\zz{\mathbf{z}}

\newcommand{\Gal}{{\rm Gal}}
\newcommand{\Pic}{{\rm Pic}}
\newcommand{\Br}{{\rm Br}}

\newcommand{\Hom}{{\rm Hom}}

\newcommand{\kbar}{{\overline{k}}}

\newcommand{\Ybar}{{\overline Y}}

\newcommand{\Ubar}{{\overline U}}

\newcommand{\Deltabar}{{\overline \Delta}}
\newcommand{\Mbar}{{\overline M}}

\newcommand{\That}{{\widehat{T}}}

\newcommand{\Mhat}{{\widehat{M}}}

\newcommand{\Shat}{{\widehat{S}}}

\newcommand{\Res}{{\rm Res}}

\newcommand{\Xbar}{{\overline{X}}}

\def\G{{\mathbb{G}}}
\newcommand{\R}{{\textup{R}}}

\def\Z{{\ZZ}}
\def\Q{{\QQ}}

\newcommand{\Tbar}{{\overline T}}

\def\T{{\mathcal{T}}}
\def\A{\mathbf{A}}

\setcounter{section}{-1}


\begin{document}	
\title[Strong approximation for a toric variety]
{           
	Strong approximation for a toric variety
}

\author{Dasheng Wei}





\date{\today}

\keywords{torus, universal torsor, strong approximation, Brauer--Manin obstruction}
\subjclass[2010]{Primary: 11G35, 14G05}

\begin{abstract} Let $X$ be a toric variety  over a number field $k$ with $\kbar[X]^\times=\kbar^\times$. Let $W\subset X$ be a closed subset of codimension at least $2$. We prove that $X\setminus W$ satisfies strong approximation with algebraic Brauer--Manin obstruction.
\end{abstract}

\maketitle

\section{Introduction}
Let $X$ be a variety over a number field $k$. Let  $S$ be a finite set of places of $k$. One says that \emph{ strong approximation} holds for $X$ off $S$ if the diagnal image of the set $X(k)$ of rational points is dense in the $S$-adelic space
$X(\A^S_k)$ equipped with the adelic topology. Strong approximation for $X$ off $S$
implies a local-global principle for the existence of integral points on integral models of X over the ring of S-integers of $k$.


Strong approximation has been widely studied for algebraic groups and their homogeneous spaces. For a semisimple, almost simple, simply
connected linear algebraic group $G$ such that $\prod_{v\in S}G(k_v)$ is not compact,
strong approximation off $S$ was established by Eichler, Kneser, Shimura,
Platonov and Prasad. Strong approximation does not hold for groups which are not simply connected, but one may define a Brauer-Manin set. For homogeneous spaces of connected algebraic groups with connected stabilizers, see for example \cite{CTX09,Har08,Dem11,WX12,WX13,BD13} for strong approximation with Brauer--Manin obstruction.

For more general varieties which are not homogeneous
spaces, few strong approximation results are known. For strong approximation with Brauer--Manin obstruction for affine varieties defined by equations of the form
$$P(t) = q(z_1,z_2,z_3),$$
see \cite{CTX13}. For more general fibrations over $\mathbb A_k^1$
with split (e.g., geometrically integral) fibers, see \cite{CTH16}. For some affine varieties defined by equations of the form
$$P(\ttt) = N_{K/k}(\zz),$$
here $P(\ttt) \in k[t_1,...,t_s]$ is a polynomial over $k$ and $N_{K/k}$ is
a norm form for a field extension $K/k$, see \cite{DW18}.

All notation is standard, we refer it to \cite{CTX13}.
We study strong approximation with Brauer--Manin
obstruction for a toric variety. More precisely, we have the following result.

\begin{theorem*} \label{thm:toric} Let $k$ be a number field and $v_0$ is a place of $k$. Let $X$ be a smooth toric variety over $k$ with $\kbar[X]^\times=\kbar^\times$. Let $W\subset X$ be a closed subset of codimension at least $2$.
Then $X\setminus W$ satisfies strong approximation with algebraic Brauer--Manin obstruction off $v_0$.
\end{theorem*}

 A similar result to the theorem was also proved by Cao and Xu in \cite{CX18}. In fact, they proved any toric variety $X$ satisfies strong approximation with Brauer--Manin obstruction off all infinite places (without $\kbar[X]^\times=\kbar^\times$). However, if $\kbar[X]^\times\neq \kbar^\times$, generally $X\setminus W$ does not satisfy strong approximation with Brauer--Manin obstruction off infinite places (see \cite[Example 5.2]{CX18}). Therefore our result cannot be generalized to their case.

In Section \ref{section:application}, we give some examples which are defined by one multi-norm equation, the computation of Brauer groups is also given in this section.

\section{The proof of the main theorem} \label{section:main}

For a number field $k$, fix an algebraic closure $\kbar$,
and let $\Gamma_k$ be the absolute Galois group.
In this section, we mainly prove Theorem \ref{thm:toric} by descent theory (see \cite{CTS87}).

\begin{lemma} \label{lemma: SA-codimension2}

Let $Z$ be a closed subset of $\mathbb A^n_k$ of codimension at least $2$. Then $Y:=\mathbb A^n_k\setminus Z$ satisfies strong approximation off $v_0$, where $v_0$ is a place of $k$.
\end{lemma}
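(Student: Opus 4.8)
The plan is to fibre $Y$ into affine lines and reduce strong approximation for $Y$ to strong approximation for $\Ga$ over $k$, which is classical (it is the statement that $k$ is dense in the ring of adeles away from $v_0$). If $Z=\emptyset$ there is nothing to prove, so assume $Z\neq\emptyset$ and $n\geq 2$. It suffices to prove the following, for the cofinal family of integral models $\mathcal Y=\mathbb A^n_{\mathcal{O}_{k,S}}\setminus\mathcal Z$, where $\mathcal Z$ is the reduced closure of $Z$: for every finite set of places $S\ni v_0$ and every family of nonempty open sets $U_v\subset Y(k_v)$ ($v\in S\setminus\{v_0\}$) there is a point of $Y(k)$ lying in all the $U_v$ and in $\mathcal Y(\mathcal{O}_v)$ for every $v\notin S$. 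Enlarging $S$, I may assume the fibres of $\mathcal Z\to\operatorname{Spec}\mathcal{O}_{k,S}$ all have dimension $\le n-2$ and, after a general $k$-linear change of coordinates on $\mathbb A^n$, that the projection $\rho\colon\mathbb A^n\to\mathbb A^{n-1}$ onto the first $n-1$ coordinates restricts to a \emph{finite} morphism $\mathcal Z\to\mathcal Z':=\rho(\mathcal Z)\subset\mathbb A^{n-1}_{\mathcal{O}_{k,S}}$, say of degree $d$; enlarging $S$ once more, $q_v:=|\mathbb F_v|>d$ for all $v\notin S$ and each fibre $\mathcal Z'_v$ is a proper closed subset of $\mathbb A^{n-1}_{\mathbb F_v}$. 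Write the coordinates of $\mathbb A^n$ as $(\mathbf a,b)$ with $\mathbf a\in\mathbb A^{n-1}$, $b\in\mathbb A^1$, and write $(\mathbf a_v,b_v)$ for a point of $U_v$.

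\textbf{Step 1 (choosing the base point).} Since $\dim\rho(Z)\le\dim Z\le n-2$, the Zariski closure $Z'$ of $\rho(Z)$ in $\mathbb A^{n-1}_k$ is a proper closed subset, hence $Z'(k_v)$ is nowhere dense in $\mathbb A^{n-1}(k_v)$ for every $v$. Combining this with strong approximation for $\mathbb A^{n-1}_k=\Ga^{n-1}$ off $v_0$, I pick $\mathbf a\in\mathbb A^{n-1}(k)$ with $\mathbf a\notin Z'$, with $\mathbf a\in\rho(U_v)$ for $v\in S\setminus\{v_0\}$, and with $\mathbf a\in\mathbb A^{n-1}(\mathcal{O}_v)$ for $v\notin S$ (for the places $v\in S\setminus\{v_0\}$ one first shrinks $U_v$ so that $\rho(U_v)\cap Z'(k_v)=\emptyset$). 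Because $\mathbf a\notin Z'$, the line $L:=\rho^{-1}(\mathbf a)\cong\mathbb A^1_k$ is entirely contained in $Y$; moreover the section $\operatorname{Spec}\mathcal{O}_{k,S}\xrightarrow{\mathbf a}\mathbb A^{n-1}_{\mathcal{O}_{k,S}}$ does not factor through $\mathcal Z'$, so the set $T:=\{\,v\notin S:\mathbf a\bmod v\in\mathcal Z'_v\,\}$ is finite.

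\textbf{Step 2 (choosing the fibre coordinate).} For $v\in T$ the set $\{\bar b\in\mathbb F_v:(\mathbf a\bmod v,\bar b)\in\mathcal Z_v\}$ has at most $d<q_v$ elements, so I may choose $\beta_v\in\mathcal{O}_v$ whose reduction avoids it. By strong approximation for $\mathbb A^1_k=\Ga$ off $v_0$ there is $b\in k$ that is $v$-adically close to $b_v$ for $v\in S\setminus\{v_0\}$ (close enough that $(\mathbf a,b)\in U_v$), that satisfies $b\equiv\beta_v\pmod{\mathfrak m_v}$ for $v\in T$, and that lies in $\mathcal{O}_v$ for all $v\notin S$. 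The point $P:=(\mathbf a,b)$ lies in $Y(k)$ since $P\in L\subset Y$; it lies in $U_v$ for $v\in S\setminus\{v_0\}$ by construction; and for $v\notin S$ it lies in $\mathcal Y(\mathcal{O}_v)$, because $P\bmod v\notin\mathcal Z_v$: this is automatic when $v\notin S\cup T$, since then $\mathbf a\bmod v\notin\mathcal Z'_v=\rho(\mathcal Z_v)$, and it holds by the choice of $\beta_v$ when $v\in T$. Thus $P$ is the desired $k$-point.

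The substantive point is Step 2: forcing the approximant to be $\mathcal Y$-integral at the infinitely many places outside $S$. This is exactly where the hypothesis $\operatorname{codim}Z\ge 2$ is used: after a generic projection it makes the fibres of $\rho|_Z$ finite, which both places the whole line $L$ inside $Y$ as soon as its base point $\mathbf a$ avoids $\rho(Z)$, and confines the integrality obstruction to the finite set $T$, where it becomes a single congruence condition handled by strong approximation on the line. Everything else is a routine unwinding of the adelic topology together with classical strong approximation for $\Ga$.
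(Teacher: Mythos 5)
Your argument is correct, and while it shares the paper's basic strategy --- reduce to classical strong approximation for $\Ga$ on an affine line contained in $Y$ --- the way you produce the line and secure integrality outside $S$ is genuinely different. The paper takes a rational point $M$ approximating the adelic data (integral away from an auxiliary finite set $S'$), notes that the lines through $M$ meeting $Z$ sweep out a proper closed subset of $\mathbb A^n_k$, chooses a second rational point $N$ outside it and integral at $S'$, and then applies strong approximation on the line $MN\cong\mathbb A^1_k$, which has local integral points at every place outside $S\cup\{v_0\}$. You instead make a generic linear change of coordinates so that $Z$ becomes finite of bounded degree $d$ over $\mathbb A^{n-1}$, choose the base point $\mathbf a$ by strong approximation on $\mathbb A^{n-1}$ so that it avoids the image of $Z$, take the fibre line, and dispose of the finitely many residual places $T$ by one congruence each, using $d<q_v$; this makes the integrality conditions completely explicit at the cost of the spreading-out preliminaries, whereas the paper's two-point construction avoids projections but needs the cone argument and the auxiliary set $S'$. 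Two harmless points to tighten: your reduction as literally stated (arbitrary finite $S\ni v_0$, arbitrary nonempty $U_v$) is slightly too strong, because $\mathcal Y(\mathcal O_v)$ can be empty at finitely many small places; you only need the claim when the basic open set $\prod_{v\in S\setminus\{v_0\}}U_v\times\prod_{v\notin S}\mathcal Y(\mathcal O_v)$ is nonempty (which is the case when approximating an actual adelic point), and then enlarging $S$ by imposing $U_v:=\mathcal Y(\mathcal O_v)$ at the new places is legitimate. Similarly, in Step 2 the condition at $v\in S\setminus\{v_0\}$ should be that $b$ lies in the nonempty open set $\{t\in k_v:(\mathbf a,t)\in U_v\}$ (nonempty precisely because $\mathbf a\in\rho(U_v)$) rather than that $b$ is close to $b_v$; and if $S\setminus\{v_0\}$ happens to be empty one still gets $\mathbf a\notin Z'$ because the relevant set of integral points of $\mathbb A^{n-1}(k)$ is Zariski dense. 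None of this affects the validity of the proof.
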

\begin{proof}
 Let $\mathfrak Y$ be an integral model of $Y$, $i.e.$, a separated $\OO_k$-scheme of finite type with the generic fiber $\YY\times_{\OO_k} k\simeq Y$. For any finite set $S \subset \Omega_k \setminus
  \{v_0\}$ containing $\infty_k \setminus \{v_0\}$ and any
  \begin{equation*}
    (p_v) \in \prod_{v \in S \cup \{v_0\}} Y(k_v) \times
    \left(\prod_{v \notin S \cup \{v_0\}} \YY(\OO_v)\right),
  \end{equation*}
  we must find $p \in Y(k)$ arbitrarily close to $p_v$ for all $v \in
  S$ with $p \in \YY(\OO_v)$ for all $v \notin S \cup \{v_0\}$.

We will prove the Lemma by replacing  $Y$ with a line ($\cong_k \mathbb A^1_k$) in $Y$. In fact, it is enough to find a line $\ell \subset
  Y$ such that
  \begin{enumerate}
  \item $\ell(k_v)$ contains a point $p_v'$ very close to $p_v$ for all $v
    \in S$, and
  \item $\ell(k_v) \cap \YY(\OO_v) \ne \emptyset$ for
    all $v \notin S \cup \{v_0\}$.
  \end{enumerate} Then the proof follows from the fact that $\ell \cong_k \mathbb A^1_k$ satisfies strong approximation off $v_0$.

  For this, we construct a suitable line $\ell$. We can choose a point $M \in Y(k)$
  arbitrarily close to $p_v$ for $v \in S \cup \{v_0\}$ because
  $\mathbb A^n_k$ satisfies weak approximation.  There is a finite
  set $S' \subset \Omega_k \setminus (S \cup \{v_0\})$ such that, for all
  $v \notin S' \cup S \cup \{v_0\}$, we have $M \in \YY(\OO_v)$. This gives (1),(2) outside $S'$ for any line through $M$.

  Recall $Z=\mathbb A^n_k\setminus Y$ has codimension at least $2$. The closure $Z'$ of the union of all
  lines through $M$ meeting $Z$ is a closed subvariety of
  $\mathbb A^n_k$ of codimension at least $1$. Therefore, $Y' := \mathbb A^n_k \setminus Z'$ is an
  open dense subset of $\mathbb A^n_k$. For all $v \in S'$, we choose
  arbitrary $p_v \in \YY(\OO_v)$. Then we have $N \in Y'(k)$ very
  close to $p_v$. This gives (2) for $S'$ for any line through $N$.
  Let $\ell$ be the line through $M$ and $N$, then $l\subset Y$ and satisfies (1) and (2), hence we complete the proof.
\end{proof}

A toric variety is an algebraic variety containing a torus as an open dense subset, such that the action of the torus on itself (by multiplication) extends to the whole variety. In this section, we will prove the main Theorem. The key idea of this proof is from \cite[Lemma 3.1]{DW18}.

\begin{proof}[Proof of the main theorem ]

Let $T$ be the torus contained in $X$ as an open dense subset and $\That$ its group of characters which is a $\Gamma_k$-module. Denote $$N:=X_*(T)=\Hom_\Z(\That,\Z)$$ which has a $\Gamma_k$-action such that $\langle \cdot\ ,\ \cdot \rangle : N\times \That \to \Z$ is the $\Gamma_k$-invariant bilinear pairing. Let $\Delta$ be a fan in $N_{\mathbb R}:=N\otimes \mathbb R$ which is $\Gamma_k$-invariant, $i.e.$, for any $\tau\in \Delta$ and $\sigma \in \Gamma_k$, we have $\sigma(\tau)\in \Delta$. Let $X=X_\Delta$  be the toric variety over $k$ associated to $\Delta$. Since $\kbar[X]^\times=\kbar^\times$,  we have $\Delta$ spans $N_\mathbb R$.

Let $\text{Div}_{\Tbar}(\Xbar)$ be the group of $\Tbar$-invariant Weil divisors of $\Xbar$. By \cite[Theorem 4.1.3]{Cox}, we have the following exact sequence
\begin{equation}\label{equ:toric-1}
 0\to \That(\cong \kbar[T]^\times/\kbar^\times) \xrightarrow{\text{div}} \text{Div}_{\Tbar}(\Xbar) \to \Pic(\Xbar) \to 0.
\end{equation}
Let $f: \T\to X$ be a universal torsor of $X$. Let $\Mhat :=\text{Div}_{\Tbar}(\Xbar)$ and $M$ the dual torus of $\Mhat$. Let $i: M\to T$ be the induced morphism by $\text{div}: \That \to \Mhat$.  We now want to apply \cite[Theorem 2.3.1, Corollary 2.3.4]{CTS87} for the
local description of universal torsors of $X$.
The restriction $\T_T$ of the universal torsor $\T$ to $T$ has the form $\T_T  = M \times_T T$, which is the pull-back of $i: M\to T$ and $\sigma: T\to T$, where $\sigma$ is a translation of $T$ induced by a splitting of the following exact sequence
$$0\to \kbar^\times \to \kbar[T]^\times \to \kbar[T]^\times/\kbar^\times \to 0.$$
We have the following commutative diagram:
$$
  \begin{CD}
    M \times_T T@>{p_1}>> M\\
    @V V {p_2}V @V V i V \\
    T @>{\sigma}>> T,
  \end{CD}
$$
where $p_1$ and $p_2$ are the natural projections. Since $\sigma$ is an isomorphism,
 the universal torsor $\T_T$ is isomorphic to $M$. Therefore, we may replace $\T_T$ by $M$ with the structure morphism $\sigma^{-1}\circ i: M\to T$.

Since $W$ has codimension at least $2$, then  $$\kbar[X\setminus W]^\times=\kbar[X]^\times=\kbar^\times \text { and } \Pic(\Xbar\setminus \overline{W})\cong\Pic(\Xbar).$$
Let $S$ be a group of multiplicative type.
We have the following commutative diagram (see for example \cite[Corollary 2.3.9]{Sko01}):
\begin{equation}\label{diag:torsor}
  \begin{CD}
    0 @>>> H^1(k,S)@>>> H^1(X,S) @>{\text{type}}>> \Hom(\Shat, \Pic(\Xbar))\\
    @. @| @V V \Res V @VV{\cong}V @.\\
    0 @>>> H^1(k,S)@>>> H^1(X\setminus W,S) @>{\text{type}}>> \Hom(\Shat, \Pic(\Xbar\setminus \overline W)).
  \end{CD}
\end{equation}
It is clear that universal torsors of $X$ exist, see \cite[Proposition 6.1.4]{Sko01}. Therefore, any universal torsor $\T'$ of $X\setminus W$ is the restriction of a universal torsor $f:\T \to X$ to $X\setminus W$, hence $\T'=\T\setminus f^{-1}(W)\subset \T$ and the restriction of $\T'$ to $T\setminus W$ is $M\setminus (i^{-1}\circ \sigma)(W)$.

Since $\Mhat$ is a permutation $\Gamma_k$-module, there are finite field extensions $K_1,\dots,K_n$ over $k$ such that  $\Mhat\cong \prod_{i=1}^{n}\Z[K_i/k]$.
Therefore
\begin{equation*}
M\cong \prod_{i=1}^{n} \R_{K_i/k}(\G_{m,K_i}).
\end{equation*}

Denote $Y:=\prod_i \R_{K_i/k}(\A^1_{K_i})$, obviously $Y$ is an affine space over $k$. We have the natural embedding $ M\subset  Y.$  We claim:

\emph{There is a closed subvariety $Z\subset Y$ of codimension at least $2$ and $Y\setminus Z \supset M\setminus (i^{-1}\circ \sigma)(W)$, such that the restriction of $\sigma^{-1} \circ i$ to $M\setminus (i^{-1}\circ \sigma)(W)$ 
can be extended to $ Y\setminus Z\rightarrow X\setminus W$.} \newline
This claim implies our theorem as follows. By Lemma \ref{lemma: SA-codimension2}, $Y\setminus Z$ satisfies strong approximation off $v_0$. Therefore,  by \cite[Lemma 3.1]{DW18}, $X\setminus W$ satisfies strong approximation with algebraic Brauer--Manin obstruction off $v_0$.

We now proves this claim. Since $X$ is a toric variety which has a $T$-action, the translation $\sigma: T\to T$ can be extended to a unique isomorphism of $X$ which we also denote by $\sigma$. Obviously $W':=\sigma^{-1}(W)\subset X$ also has codimension at least $2$. Therefore we only need to show that there exists $Z\subset Y$ such that $i: M\setminus i^{-1}(W')\to T\setminus W'$ can be extended to $Y\setminus Z\rightarrow X\setminus W'$, noting that the composite map $Y\setminus Z \to X\setminus W'\xrightarrow{\sigma} X\setminus W$ gives the map we are looking for.

We only need to  show such $Z$ exists over $\kbar$. Indeed, if we have found such $g:\overline{Y}\setminus {Z}\rightarrow \Xbar\setminus \overline {W'}$ over $\kbar$, then we also get the twisted map $g^\tau: \overline{Y}\setminus \tau(Z)\rightarrow \Xbar\setminus \overline {W'}$ for any $\tau\in \Gamma_k$.  For $\tau_1,\tau_2\in \Gamma_k$, the two maps $g^{\tau_1}$ and $g^{\tau_2}$ restricted to their intersection $$(\overline{Y}\setminus\tau_1(Z))\cap (\overline{Y'}\setminus\tau_2(Z))=\overline{Y}\setminus(\tau_1(Z)\cup \tau_2(Z))$$ are the same by the reduced and separated property of $\overline{Y}$ and $\Xbar\setminus \overline {W'}$ over $\kbar$. Then we can replace $Z$ by the union  $\cup_{\tau\in \Gamma_k} \tau(Z)$ which is in fact a finite union and also has codimension at least $2$. Therefore we get a morphism $Y\setminus Z\rightarrow X\setminus \overline {W'}$ over $k$ and $Z$ has codimension at least $2$.

Now we assume $k=\kbar$. Let $\Deltabar$ be the fan of $\Delta$ omitting the $\Gamma_k$-action.

We shall call a 1-dimensional cone a ray. Let $\Deltabar(1)$ be the set of rays of $\Deltabar$. Let $D_\rho$ be the $\Tbar$-invariant Weil divisor associated to $\rho\in \Deltabar(1)$ (see \cite[Section 4.1]{Cox}). Recall $\Mhat=\bigoplus_{\rho\in \Deltabar(1)}\Z D_\rho$. Denote $\tilde N:=X_*(M)=\Hom_\Z(\Mhat,\Z)$, ${\tilde D}_\rho\in \tilde N$ the dual of $D_\rho$, and $\tilde N_\mathbb R:=\tilde N\otimes_\Z \mathbb R$. It is clear that $\{{\tilde D}_\rho: \rho\in \Deltabar(1)\}$ is a basis of $\tilde N_\mathbb R$.

Let $C$ be the fan associated to the simplicial cone in $\tilde N_\mathbb R$ generated by $\{{\tilde D}_\rho:\rho\in \Deltabar(1)\}$. Then $\Ybar\cong Y_C$, the toric variety of $C$ with the natural $\Mbar$-action which is an affine space. Let $R_\rho\subset \tilde N_\mathbb R$ be the ray generated by ${\tilde D}_\rho$. Then $\{R_\rho: \rho\in \Deltabar(1)\}$  are all rays of $C$. Let
$$C':=\{0\}\cup \{R_\rho: \rho\in \Deltabar(1)\}\subset C,$$
which is a subfan of $C$. Denote by $Y_{C'}\ (\subset Y_C)$ the open toric subvariety of the subfan $C'$. Similarly, let $$\overline {\Delta'}:=\{0\}\cup \overline \Delta(1)\subset C.$$ Denote by $X_{\overline {\Delta'}}\ (\subset X_{\overline \Delta})$ the open toric subvariety of the subfan $\overline {\Delta'}$.

We will prove the claim as follows. We will show such $Z$ exists for $X$ ($i.e.$, $W'=\emptyset$), then we get a morphism $g: Y\setminus Z \to X$ which extends $i: M\to T$; in the following we will show $g^{-1}(W')\subset Y\setminus Z$ has codimension at least $2$. Replacing $Z$ by $Z\cup g^{-1}(W'))$, the proof of the claim as follows.

We will prove that such $Z$ exists for $X$. In fact, we will prove:

 \emph{There is a toric morphism $g: Y_{C'} \to X_{\overline {\Delta'}}$ which extends $\Mbar\to \Tbar$.}\newline
  Note that $Y_{C'}\subset Y_C$ and $Y_C\setminus Y_{C'}$ has codimension $2$ since $C'$ contains all rays in $C$.

Let $\tilde g: \tilde N_\mathbb R\to N_\mathbb R$ be the morphism induced by $\text{div}: \That\to \Mhat $.
The claim follows from the fact that there is a morphism of fans $C'\to \overline {\Delta'}$ which is induced by $\tilde g:\tilde N_\mathbb R\to N_\mathbb R$ (see \cite[Theorem 3.3.4]{Cox}).  

For any $\rho\in \Deltabar(1)$, let $u_\rho\in N$ be the minimal generator of $\rho$. By \cite[Proposition 4.1.2]{Cox}, we have $$\text{div}: \That \to \Mhat, \chi \mapsto \text{div}(\chi)=\sum_{\rho\in \Deltabar}\langle u_\rho,\chi\rangle D_\rho.$$
For $\sigma\in \Deltabar(1)$, $\tilde g({\tilde D}_\sigma)\in N$ and
\begin{equation*}
\tilde g({\tilde D}_\sigma)=\left(\chi\mapsto {\tilde D}_\sigma(\sum_{\rho\in \Deltabar(1)}\langle u_\rho,\chi\rangle D_\rho)=\langle u_\sigma,\chi\rangle \right),
\end{equation*}
then $\tilde g({\tilde D}_\sigma)=u_\sigma$.
Therefore $\tilde g(R_\rho)=\rho$ for any $\rho\in \Deltabar(1)$, hence $\tilde g$ induces a morphism of fans $C' \to \overline {\Delta'}$. 


In the following, We will prove that $g^{-1}(W')\subset Y_{C'}$ is a closed subset of codimension $\geq2$.

Let $U_{R_\rho}$ (resp. $V_\rho$) be the toric open subvariety of $Y_{C'}$ (resp. $X_{\Delta'}$) associated to $R_\rho\in C'$ (resp. $\rho\in \overline \Delta(1)$). Let $g_\rho: U_{R_\rho}\to V_\rho$ be the induced toric morphism by $\tilde g:\tilde N\to N$ since $\tilde g(R_\rho)=\rho$. We have $Y_{C'}$ (resp. $X_{\Delta'}$) is the union of $U_{R_\rho}$ (resp. $V_\rho$), $\rho\in \overline\Delta(1)$. The map $g_\rho$ is the restriction of $g$ to $U_\rho$. Therefore, we only need to show that $g_\rho^{-1}(W'\cap U_{R_\rho})\subset U_{R_\rho}$ has codimension at least 2.

We may write $\Mhat \cong \Mhat_1\oplus \Mhat_2$, where $\Mhat_1$ and $\Mhat_2$ free abelian groups, such that $\That \xrightarrow{\text{div}}\Mhat$ is identity with the composite map
\begin{equation}\label{morphism:d}
\That \xrightarrow{d} \Mhat_1\xrightarrow{j} \Mhat_1\oplus \Mhat_2,
\end{equation}
where the cokernel of $d$ is finite and $j$ is the natural embedding. Let $N':=X_*(M_1)=\Hom(\Mhat_1, \Z)$ be the dual of $\Mhat_1$, obviously $\tilde N\cong N'\oplus X_*(M_2)$, where $X_*(M_2)=\Hom(\Mhat_2, \Z)$. Therefore we have the dual morphism $$\tilde N \cong N'\oplus X_*(M_2) \xrightarrow{j^*} N' \xrightarrow{d^*} N,$$ where $j^*$ is in fact the natural projection. Let $R_\rho'\subset N'_\mathbb R:=N'\otimes_\Z \mathbb R$ be the ray of the image of $R_\rho$ by $j^*\otimes \mathbb R$. Let $U'_{R_\rho'}$ be the toric variety associated to $R_\rho'\subset N'_\mathbb R$. Obviously $U_{R_\rho}\cong U'_{R_\rho'}\times \G_m^m$ where $m=\dim(\Mhat_2)$, and $d^*$ induces a toric morphism $g_\rho': U'_{R_\rho'} \to V_\rho$.  Then $g_\rho$ is identity with the composite map $$U'_{R_\rho'}\times \G_m^m \xrightarrow{p_1} U'_{R_\rho'} \xrightarrow{g_\rho'}  V_\rho,$$ where $p_1$ is the natural projection.

Let $\rho^\vee\subset \That\otimes_\Z \mathbb R$ (resp. $R_\rho'^\vee\subset \Mhat_1 \otimes_\Z \mathbb R$) be the dual cone of $\rho$ (resp. $R_\rho'$). Then the morphism $d$ in (\ref{morphism:d}) induces a morphism $d_\rho: k[\rho^\vee\cap \That] \to k[R_\rho'^\vee\cap \Mhat_1]$. Since the cokernel of $d$ is finite, there exists $m>0$ such that $m\rho'\in d(\rho^\vee\cap \That)$ for any $\rho'\in R_\rho'^\vee \cap \Mhat_1$. Therefore, $d_\rho$ is a finite and injective morphism, $i.e.$, $g_\rho'$ is  a finite and surjective morphism.
\end{proof}
\begin{remark}
\begin{enumerate}
\item Let $N$ be a lattice and $N_\mathbb R=N\otimes_\Z \mathbb R$. Let $\Delta$ be a fan in $N_\mathbb R$ and $X=X_\Delta$ the toric variety of  $\Delta$. In fact, the condition $\kbar[X]^\times =\kbar^\times$ is equivalent to  the fact that $\Delta$ spans $N_\mathbb R$ over $\mathbb R$.

\item If $W$ is not empty, the condition $\kbar[X]^\times=\kbar^\times$ is necessary. For example, let $k=\Q$, $X=\G_a\times \G_m$ a toric variety with $\kbar[X]^\times/\kbar^\times\cong \Z$, $W=\{(0,1)\}\subset X$. Then $X\setminus W$ does not satisfy strong approximation with Brauer--Manin obstruction off the infinite place (see \cite[Example 5.2]{CX18}).
\end{enumerate}
\end{remark}

\section{Some examples} \label{section:application}

In this section, we will give some typical varieties which satisfy strong approximation with algebraic Brauer--Manin obstruction off $v_0$. In fact this is a direct consequence of the main theorem.
\begin{proposition} \label{prop: multinorm} Let $m,n \geq 1$ and let $K_i,L_j$ be finite field extensions over a number field $k$, where $i=1,\dots,m$ and $j=1,\dots,n$. Let $X$ be the smooth locus of the affine variety defined by
\begin{equation}\label{equ:multi-1} \prod_{i=1}^m N_{K_i/k}(\ww_i)=c\prod_{j=1}^n  N_{L_j/k}(\zz_j)^{s_j},
\end{equation}
with $c\in k^\times$ and $s_j\geq 1$, $j=1,\dots, n$. Then $\kbar[X]^\times=\kbar^\times$ and $X$ satisfies strong approximation with algebraic Brauer--Manin obstruction off $v_0$.
\end{proposition}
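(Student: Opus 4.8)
The plan is to exhibit $X$ as a smooth (in general non-split) toric variety over $k$ and then invoke Theorem \ref{thm:toric}. Write $\mathcal{Z}$ for the affine variety defined by (\ref{equ:multi-1}), so that $X$ is the smooth locus of $\mathcal{Z}$. Set $T':=\prod_{i=1}^m\R_{K_i/k}(\G_{m,K_i})\times\prod_{j=1}^n\R_{L_j/k}(\G_{m,L_j})$ and let $\phi:T'\to\Gm$ be the homomorphism sending $(\ww,\zz)$ to $\prod_iN_{K_i/k}(\ww_i)\cdot\prod_jN_{L_j/k}(\zz_j)^{-s_j}$, so that $\mathcal{Z}\cap T'=\phi^{-1}(c)$. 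On character lattices the dual map $\Z\to\That'$ sends $1$ to an element one of whose coordinates in the standard permutation basis of $\That'$ equals $1$; hence it is primitive, its cokernel is torsion free, and $T:=\ker\phi$ is a $k$-torus; moreover $\phi$ is surjective since each $N_{K_i/k}$ is. Let $T'$ act coordinate-wise on $\AA:=\prod_i\R_{K_i/k}(\mathbb{A}^1_{K_i})\times\prod_j\R_{L_j/k}(\mathbb{A}^1_{L_j})$. A direct computation — using that $t\in\ker\phi$ forces $\prod_iN_{K_i/k}(t_{1,i})=\prod_jN_{L_j/k}(t_{2,j})^{s_j}$ — shows that the subgroup $T$ preserves $\mathcal{Z}\subset\AA$, hence preserves its smooth locus $X$. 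Any point of $\mathcal{Z}$ with all coordinates invertible is a smooth point, so $Y_0:=\phi^{-1}(c)=\mathcal{Z}\cap T'$ is contained in $X$; it is nonempty over $\kbar$ (as $\phi$ is onto), it is a single $T$-orbit (a coset of $\ker\phi$ in $T'$), hence a principal homogeneous space under $T$, and it is open and dense in the irreducible variety $X$. Thus $X$ is a smooth, geometrically integral toric variety over $k$ with torus $T$, its dense orbit $Y_0$ being a torsor under $T$ — which is exactly the generality covered by Theorem \ref{thm:toric} (whose proof reduces this case to the split-orbit one).

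Next I would check $\kbar[X]^\times=\kbar^\times$. Over $\kbar$ one has $\R_{K_i/k}(\G_{m,K_i})_{\kbar}\cong\Gm^{[K_i:k]}$ with the norm becoming the product of coordinates, so $\mathcal{Z}_{\kbar}$ is the binomial hypersurface $\prod_aw_a=c\prod_bz_b^{s(b)}$, where $a$ runs over the $M:=\sum_i[K_i:k]$ and $b$ over the $N:=\sum_j[L_j:k]$ coordinates. The only relations in $\widehat{T_{\kbar}}$ among the coordinate functions $w_a,z_b$, regarded as characters of the dense torus, are the integer multiples of $\sum_aw_a=\sum_bs(b)z_b$, so $\kbar[\mathcal{Z}]$ is the semigroup algebra $\kbar[S]$ of the submonoid $S\subset\widehat{T_{\kbar}}$ generated by the $w_a,z_b$. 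Since $m,n\ge1$, that relation has strictly mixed signs; hence the only relation among the $w_a,z_b$ with non-negative coefficients is trivial, $S$ is a pointed monoid, and $\kbar[S]^\times=\kbar^\times$. Finally $\mathcal{Z}$ is a Cohen--Macaulay hypersurface whose singular locus has codimension $\ge2$ (it sits inside the locus where at least two of the coordinate functions $\ww$ vanish), so $\mathcal{Z}$ is normal and $\kbar[X]=\kbar[\mathcal{Z}]=\kbar[S]$ has unit group $\kbar^\times$.

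Now I would apply Theorem \ref{thm:toric} to the smooth toric variety $X$ with $W=\emptyset$: since $\kbar[X]^\times=\kbar^\times$, it follows that $X$ satisfies strong approximation with algebraic Brauer--Manin obstruction off a place $v_0$ of $k$, which is the desired conclusion.

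The equivariance computation and the final appeal to Theorem \ref{thm:toric} are formal; I expect the one point requiring genuine care to be the verification that $\kbar[X]^\times=\kbar^\times$ — namely identifying $\mathcal{Z}_{\kbar}$ with the pointed semigroup algebra $\kbar[S]$ and using the (easy) normality of $\mathcal{Z}$ to pass from $\mathcal{Z}$ to its smooth locus $X$.
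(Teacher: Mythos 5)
Your proposal is correct in substance, but it takes a genuinely different route from the paper. The paper never passes through Theorem \ref{thm:toric}: it first checks $\kbar[X]^\times=\kbar^\times$ by a divisor computation on the split form $w_1\cdots w_{m'}=cz_1^{r_1}\cdots z_{n'}^{r_{n'}}$, observes that when $\gcd(s_1,\dots,s_n)=1$ the group $\Pic(\Xbar)$ is torsion free so Theorem \ref{thm:main} applies, and in general argues directly by descent: the restriction of a universal torsor to the open orbit $U$ is the quasi-split torus $\prod_{i,j}\R_{K_i\otimes_k L_j/k}(\G_m)$, the torsor map extends to the complement of a codimension-$2$ subset of the ambient affine space, and one concludes by Lemma \ref{lemma: SA-codimension2} together with \cite[Lemma 3.1]{DW13}. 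You instead exhibit $X$ itself as a smooth toric variety whose dense orbit is a torsor under $T=\ker\phi$ and quote Theorem \ref{thm:toric} with $W=\emptyset$; your verification of $\kbar[X]^\times=\kbar^\times$ via the pointed monoid algebra, Serre's criterion and Hartogs is a valid alternative to the paper's divisor argument. What the two approaches buy: yours is shorter and makes the proposition a literal corollary of Theorem \ref{thm:toric}, but it leans on reading ``toric variety'' there broadly enough that the dense orbit may be a nontrivial torsor with no rational point; this is indeed the generality the paper's proof of Theorem \ref{thm:toric} handles (its opening reduction via \cite[Theorem 6.3.1]{Sko01}), so the appeal is legitimate, though it imports the Hasse-principle-with-Brauer--Manin input for torsors under tori, which the paper's direct descent proof avoids (its universal torsor restricted to $U$ is quasi-split, hence automatically has rational points) while also producing the explicit torsor description that is in the spirit of this ``examples'' section. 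Two small points you should make explicit: geometric irreducibility of the hypersurface (e.g.\ the defining polynomial is of degree one in any $w$-coordinate with coefficient and constant term coprime), which you need both for the density of the open orbit and for the injectivity of restriction to it when identifying the coordinate ring with the monoid algebra; and the degenerate case $\sum_i[K_i:k]=1$, where the hypersurface is already smooth so the normality/Hartogs step is vacuous.
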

\begin{proof} It is clear that $X$ contains an open subset $U$ by $\prod_{i=1}^m N_{K_i/k}(\ww_i)\neq 0$, which is a principal homogeneous space of the torus defined by $$\prod_{i=1}^m  N_{K_i/k}(\ww_i')\cdot\prod_{j=1}^n N_{L_j/k}(\zz_j')^{s_j}=1.$$
In fact, $X$ is a toric variety.

Over $\kbar$, $X$ can be viewed as the smooth locus of the affine variety defined by
\begin{equation}\label{equ:multi-2}
w_1 \cdots w_{m'}=cz_1^{r_1}\cdots z_{n'}^{r_{n'}}
\end{equation}
with all $r_j\geq 1$. Let $D_{i,j}$ be the prime divisor of $X$ defined by $w_i=z_j=0$ for $i,j$.
We can see $\text{div}(w_i)=\sum_{j=1}^{n'}r_j D_{ij}$ and $\text{div}(z_j)=\sum_{i=1}^{m'}D_{ij}$.

We now show $\kbar[X]^\times=\kbar^\times$.
Since every $f \in
  \kbar[X]^\times$ has the form
  \begin{equation*}
    f = c'w_1^{\alpha_1}\cdots w_{m'}^{\alpha_{m'}} z_1^{\beta_1}\cdots z_{n'}^{\beta_{n'}}
  \end{equation*}
  with $c' \in \kbar^\times$ and $\alpha_1, \dots, \alpha_{m'}, \beta_1, \dots, \beta_{n'} \in
  \ZZ$, and
  \begin{equation*}
    0 = \textup{div}(f)=\sum_{i=1}^{m'} \sum_{j=1}^{n'} (\alpha_i r_j+\beta_j) D_{i,j},
  \end{equation*}
  and $D_{i,j}$ are linearly independent, we have $\beta_j=-\alpha_i r_j$ for $i,j$,  therefore $\alpha_1=\cdots=\alpha_{m'}$ and $\beta_j=-\alpha_1 r_j$ for all $j$, then $f$ is a constant by (\ref{equ:multi-2}).

  Since $X$ is a toric variety, the proof follows the main theorem, but here we give a direct argument. For any universal torsor $\T$ of $X$, the restriction $\T_U$ is a quasi-split torus, and $\T_U\cong \prod_{i,j} \R_{K_i\otimes_k L_j/k}(\G_m)$,  and the map $\T_U \to U$ is given by $$(u_{ij})_{i,j} \mapsto (\eta_i\prod _{j=1}^n  N_{K_i\otimes_k L_j/K_i}(u_{ij})^{s_j})_i\times (\xi_j \prod _{i=1}^m N_{K_i\otimes_k L_j/L_j}(u_{ij}))_j$$ with $\eta_i \in K_i^\times,\xi_j\in L_j^\times$ such that $\prod _iN_{K_i/k}(\eta_i)=c \prod _j N_{L_j/k}(\xi_j)^{s_j}$. Obviously $\T_U$ can naturally embed into $$Y:=\prod_{i,j} \R_{K_i\otimes_k L_j/k}(\mathbb A^1)\cong \mathbb A_k^m$$ for some $m$, hence $\T_U \to X$ can be extended to $g: Y\to X'$, where $X'$ is the affine variety defined by (\ref{equ:multi-1}).
  By the equation (\ref{equ:multi-2}), the singular locus $V$ of $X'$, $i.e.$, $X'\setminus X$ is contained in $W$ which has codimension 2, where $W$ is the union of
  $$\{w_i=w_j=0, z_l=0\} \text{ with } i\neq j.$$ Therefore, the map $\T_U \to U$ can be extended to $Y\setminus f^{-1}(V)\to X$. Since $f^{-1}(V)\subset f^{-1}(W)$, then $f^{-1}(V) \subset Y$ has codimension at least $2$. By Lemma \ref{lemma: SA-codimension2}, $Y\setminus f^{-1}(V)$ satisfies strong approximation off $v_0$. By \cite[Lemma 3.1]{DW18}, $X$ satisfies strong approximation with algebraic Brauer--Manin obstruction off $v_0$.
\end{proof}

\begin{corollary} \label{lem:Br-1} Let $L$ and $K$ be finite Galois extensions over $k$ . Let $X$ be the smooth locus of the affine variety over $k$ defined by
$$ N_{L/k}(\ww)=cN_{K/k}(\zz)$$
with $c\in k^\times$, and let $T$ be the multi-norm $1$ torus over $k$ defined by $ N_{L/k}(\ww')N_{K/k}(\zz')=1$. Then
\begin{equation*} \Br_1(X)/\Br_0(X)\cong H^2(L.K/k, \That).
\end{equation*}
Furthermore, if $L\cap K=k$ and the degrees of the maximal abelian subextensions of $L/k$ and $K/k$ are relatively  prime, then $\Br_1(X)=\Br_0(X)$ and $X$ satisfies strong approximation off $v_0$.
\end{corollary}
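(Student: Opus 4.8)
The plan is to read everything off Proposition~\ref{prop: multinorm} together with a cohomological computation of $\Br_1(X)/\Br_0(X)$. First note that $X$ is precisely the variety of Proposition~\ref{prop: multinorm} in the case $m=n=1$, $K_1=L$, $L_1=K$, $s_1=1$; in particular $\kbar[X]^\times=\kbar^\times$, the open set $U=\{N_{L/k}(\ww)\neq 0\}$ is a principal homogeneous space of $T$, the group $\Pic(\Xbar)$ is torsion free (this is the case $\gcd(s_j)=1$ in the proof of that proposition), and $X$ satisfies strong approximation with algebraic Brauer--Manin obstruction off $v_0$. Write $M=L.K$ and $G=\Gal(M/k)$.

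For the first assertion, I would obtain $\Br_1(X)/\Br_0(X)\cong H^1(k,\Pic(\Xbar))$ from the Hochschild--Serre spectral sequence, using $\kbar[X]^\times=\kbar^\times$ and $H^3(k,\Gm)=0$ (valid over a number field). Since $L$ and $K$ are Galois, $\Gamma_M$ acts trivially on $\Hom_k(L,\kbar)\times\Hom_k(K,\kbar)$, which indexes the prime divisors of $\Xbar$ in $\Xbar\setminus\Ubar$; hence $\Gamma_M$ acts trivially on $\Pic(\Xbar)$, and as $\Pic(\Xbar)$ is torsion free, inflation--restriction gives $H^1(k,\Pic(\Xbar))=H^1(G,\Pic(\Xbar))$. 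Now apply the long exact cohomology sequence over $G$ of
\begin{equation*}
0\to\That\to\Div_{\Xbar\setminus\Ubar}(\Xbar)\to\Pic(\Xbar)\to 0 .
\end{equation*}
Here $\Div_{\Xbar\setminus\Ubar}(\Xbar)$ is the permutation module on $\Hom_k(L,\kbar)\times\Hom_k(K,\kbar)$, on which $G$ acts \emph{freely} (the stabiliser of a point is $\Gal(M/L)\cap\Gal(M/K)=\Gal(M/L.K)=1$), so it is an induced $\ZZ[G]$-module and its cohomology vanishes in positive degrees; the connecting map therefore gives $H^1(G,\Pic(\Xbar))\xrightarrow{\ \sim\ }H^2(G,\That)$, which is the stated isomorphism $\Br_1(X)/\Br_0(X)\cong H^2(L.K/k,\That)$.

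For the ``furthermore'' part, assume $L\cap K=k$; then $G=G_1\times G_2$ with $G_1=\Gal(L/k)$, $G_2=\Gal(K/k)$, and it suffices to prove $H^2(G,\That)=0$. Dualising $1\to T\to\R_{L/k}\Gm\times\R_{K/k}\Gm\to\Gm\to 1$ gives an exact sequence of $G$-modules
\begin{equation*}
0\to\ZZ\to\ZZ[G_1]\oplus\ZZ[G_2]\to\That\to 0,
\end{equation*}
where the first map sends $1$ to $(N_{G_1},N_{G_2})$ with $N_{G_i}=\sum_{g\in G_i}g$. In its long exact sequence $\ZZ[G_i]$ is induced from $G_{3-i}$, so by Shapiro $H^n(G,\ZZ[G_1])=H^n(G_2,\ZZ)$ and $H^n(G,\ZZ[G_2])=H^n(G_1,\ZZ)$, and under these identifications the maps induced by $\ZZ\to\ZZ[G_i]$ are the restriction maps to $G_{3-i}$. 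In degree $2$ this reads $\widehat{G_1^{\ab}}\oplus\widehat{G_2^{\ab}}=H^2(G,\ZZ)\to H^2(G_2,\ZZ)\oplus H^2(G_1,\ZZ)$, the interchange of the two characters, hence an isomorphism; therefore $H^2(G,\That)\cong\ker\big(H^3(G,\ZZ)\to H^3(G_2,\ZZ)\oplus H^3(G_1,\ZZ)\big)$. The Künneth formula for $H^3(G_1\times G_2,\ZZ)$ (trivial coefficients; $G_1,G_2$ finite, so finite free resolutions are available) shows that the image of the cross product, $\mathrm{pr}_1^*H^3(G_1,\ZZ)\oplus\mathrm{pr}_2^*H^3(G_2,\ZZ)$, maps isomorphically under the two restriction maps, while the quotient is $\mathrm{Tor}^{\ZZ}_1\big(H^2(G_1,\ZZ),H^2(G_2,\ZZ)\big)=\mathrm{Tor}^{\ZZ}_1\big(\widehat{G_1^{\ab}},\widehat{G_2^{\ab}}\big)$. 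Hence $H^2(G,\That)\cong\mathrm{Tor}^{\ZZ}_1\big(\widehat{G_1^{\ab}},\widehat{G_2^{\ab}}\big)$, which vanishes exactly when $|G_1^{\ab}|$ and $|G_2^{\ab}|$ --- the degrees over $k$ of the maximal abelian subextensions of $L$ and of $K$ --- are coprime, i.e. under the hypothesis. So $\Br_1(X)=\Br_0(X)$, whence $X(\A_k)^{\Br_1}=X(\A_k)$ and the conclusion of Proposition~\ref{prop: multinorm} becomes plain strong approximation off $v_0$.

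The formal inputs (Hochschild--Serre, inflation--restriction, Shapiro's lemma, the Künneth formula, and $H^3(k,\Gm)=0$) I would not reproduce. The point requiring genuine care is the core of the last paragraph: writing down the correct $G$-module extension realising $\That$, checking that the connecting homomorphisms are the restriction maps, and tracking the Künneth filtration precisely enough to identify $H^2(G,\That)$ with $\mathrm{Tor}^{\ZZ}_1(\widehat{G_1^{\ab}},\widehat{G_2^{\ab}})$; everything else is either formal or quoted from Proposition~\ref{prop: multinorm}.
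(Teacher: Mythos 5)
Your proof is correct and takes essentially the same route as the paper: it quotes Proposition \ref{prop: multinorm}, uses $0\to\That\to\Div_{\Xbar\setminus\Ubar}(\Xbar)\to\Pic(\Xbar)\to 0$ together with the vanishing of the cohomology of the induced divisor module to get $\Br_1(X)/\Br_0(X)\cong H^2(L.K/k,\That)$, and then uses $0\to\Z\to\Z[G_1]\oplus\Z[G_2]\to\That\to 0$, Shapiro's lemma and the K\"unneth formula to show $H^2(G,\That)=0$ under the coprimality hypothesis, concluding via Proposition \ref{prop: multinorm}. The only difference is bookkeeping: you express the K\"unneth obstruction in $H^3(G,\Z)$ as $\mathrm{Tor}_1\bigl(\widehat{G_1^{\mathrm{ab}}},\widehat{G_2^{\mathrm{ab}}}\bigr)$ (and the exact identification $H^2(G,\That)\cong\mathrm{Tor}_1$ is slightly more than needed, only its vanishing matters), while the paper works with the equivalent mixed term $\Hom(G_1^{\mathrm{ab}},\Hom(G_2^{\mathrm{ab}},\Q/\Z))$; both vanish precisely when the abelianized degrees are coprime.
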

\begin{proof} By Proposition \ref{prop: multinorm}, $\kbar[X]^\times=\kbar^\times$. Let $U$ be the open subset of $X$ by $N_{L/k}(\ww)=cN_{K/k}(\zz) \neq 0$, then $U$ is a principal homogeneous space of $T$ and $\kbar[U]^\times/\kbar^\times \cong \That$ as $\Gamma_k$-modules, hence we have the following exact sequence
\begin{equation}\label{seq:picard-1}
0\rightarrow \That \rightarrow \textup{Div}_{\Xbar\setminus\Ubar}(\Xbar)\to \Pic(\Xbar) \to \Pic(\Ubar)=0.
\end{equation}
Let $G_1:=\Gal(L/k)$, $G_2:=\Gal(K/k)$ and $G:=\Gal(L.K/k)$.
Since $\textup{Div}_{\Xbar\setminus\Ubar}(\Xbar)\cong \Z[G_1]\bigotimes \Z[G_2]$ as $\Gamma_k$-module and split by $L.K$, then we can view $\textup{Div}_{\Xbar\setminus\Ubar}(\Xbar)$ as a $G$-module. For $i\geq 1$, by Shapiro's Lemma, we have $$H^i(G,\textup{Div}_{\Xbar\setminus\Ubar}(\Xbar))=H^i(L.K/K,\Z[G_1])=H^i(L/L\cap K, \Z[G_1])=0.$$

The long exact sequence in Galois cohomology associated to (\ref{seq:picard-1}) gives the exact sequence
\begin{equation*}
H^1(G,\textup{Div}_{\Xbar\setminus\Ubar}(\Xbar))\to H^1(G,\Pic(X_{L.K})) \to H^2(G,\That)\to H^2(G,\textup{Div}_{\Xbar\setminus\Ubar}(\Xbar)),
\end{equation*}
which implies $$ H^1(G,\Pic(X_{L.K})) \cong H^2(G,\That).$$
Since $\Pic(\Xbar)$ is free and split by $L.K$, we have $$\Br_1(X)/\Br_0(X)\cong H^1(k,\Pic(\Xbar))\cong H^1(G,\Pic(X_{L.K})) \cong H^2(G,\That).$$

The long exact sequence in Galois cohomology associated to the exact sequence $$0\rightarrow \Z \to \Z[G_1]\oplus \Z[G_2]\to \That\to 0$$
gives the exact sequence \begin{equation}\label{seq:That}
\aligned H^2(G,\Z)&\xrightarrow{i} H^2(L.K/L,\Z)\oplus H^2(L.K/K,\Z)\to H^2(G,\That)\\
&\to H^3(G,\Z)\xrightarrow{j} H^3(L.K/L,\Z)\times H^3(L.K/K,\Z),
\endaligned \end{equation}
where the maps $i$ and $j$ are induced by the restriction maps. If $L\cap K=k$, we have $\Gal(L.K/K)\cong G_1$, $\Gal(L.K/L)\cong G_2 $ and $G\cong G_1\times G_2$ canonically, it is clear that $i$ is surjective. On the other hand, $$H^1(G_1,H^1(G_2,\Q/\Z))=\Hom(G_1^{ab},\Hom(G_2^{ab},\Q/\Z)),$$
where $G_1^{ab}$ and $G_2^{ab}$ are their maximal abelian quotient. By our assumption, $\# G_1^{ab}$ and $\# G_2^{ab}$ are relatively prime, hence $$H^1(G_1,H^1(G_2,\Q/\Z))=0.$$
By the K\"{u}nneth formula (\cite[Exercise II.1.7]{NSW99}), the  map $j$ in (\ref{seq:That}) is injective, hence (\ref{seq:That}) implies $H^2(G,\That)=0$. Therefore $$\Br_1(X)=\Br_0(X).$$ By Proposition \ref{prop: multinorm}, $X$ satisfies strong approximation off $v_0$.
\end{proof}

\bf{Acknowledgment} 	 
 \it{The work is supported by 
  National Natural Science Foundation of China (Grant Nos. 11622111, 11631009, 11621061 and 11688101). The author thanks Yifei Chen and D. Loughran for useful comments and discussions.}



\bigskip
{\small

{\scshape
D. Wei: Academy of Mathematics and System Science,  CAS, Beijing
100190, P.R.China and School of mathematical Sciences, University of  CAS, Beijing
100049, P.R.China
}
\smallskip

{\it E-mail: }
\url{dshwei@amss.ac.cn}
}

\end{document}